\newtheorem{theorem}{Theorem}
\newtheorem{lemma}[theorem]{Lemma}
\newtheorem{proposition}[theorem]{Proposition}
\newcommand\FF{{\mathbb F}}
\DeclareTextCompositeCommand{\v}{OT1}{l}{l\nobreak\hspace{-.1em}'}
\DeclareTextCompositeCommand{\v}{OT1}{t}{t\nobreak\hspace{-.1em}'\nobreak\hspace{-.15em}}
\begin{document}
\title{Hypergraphs with uniform Tur\'an density equal to 8/27\thanks{The work of all authors was supported by the MUNI Award in Science and Humanities (MUNI/I/1677/2018) of the Grant Agency of Masaryk University.}}

\author{Frederik Garbe\thanks{Institute of Computer Science, Czech Academy of Sciences, Pod Vod\'arenskou v\v{e}\v{z}\i\i{} 271/2, 182 00 Prague, Czech Republic. Email: {\tt garbe@cs.cas.cz}. Previous affiliation: Universit\"at Heidelberg, Im Neuenheimer Feld 205, 69120 Heidelberg, Germany.}\hskip 0.75ex\thanks{Previous affiliation: Faculty of Informatics, Masaryk University, Botanick\'a 68A, 602 00 Brno, Czech Republic.}\and
        \newcounter{muni}
	\setcounter{muni}{3}
        Daniel I\v{l}kovi\v{c}\thanks{Institute of Mathematics, Leipzig University, Augustusplatz 10, 04109 Leipzig. E-mail: {\tt \{daniel.ilkovic,daniel.kral\}@uni-leipzig.de}.}\hskip 0.75ex\thanks{Mathematics Institute and DIMAP, University of Warwick, CV4 7AL Coventry, UK.}\hskip 1.00ex{}$^\fnsymbol{muni}$\and
        \newcounter{LU}
	\setcounter{LU}{4}
        Daniel Kr{\'a}\v l$^\fnsymbol{LU}$\thanks{Max Planck Institute for Mathematics in the Sciences, Inselstra{\ss}e 22, 041 03 Leipzig, Germany. E-mail: {\tt \{filip.kucerak,dan.kral\}@mis.mpg.de}.}\hskip 0.75ex{}$^\fnsymbol{muni}$\and
        \newcounter{mpi}
	\setcounter{mpi}{6}
	Filip Ku\v cer\'ak$^\fnsymbol{mpi}${}$^\fnsymbol{muni}$\and
        Ander Lamaison\thanks{Extremal Combinatorics and Probability Group (ECOPRO), Institute for Basic Science (IBS), Daejeon, South Korea. This author was also supported by IBS-R029-C4. E-mail: {\tt ander@ibs.re.kr}.}\hskip 1.75ex{}$^\fnsymbol{muni}$
	}

\date{}

\maketitle

\begin{abstract}
In the 1980s,
Erd\H{o}s and S\'os initiated the study of Tur\'an problems with a uniformity condition on the distribution of edges:
the uniform Tur\'an density of a hypergraph $H$ is the infimum over all $d$ for which
any sufficiently large hypergraph with the property that all its linear-size subhypergraphs have density at least $d$ contains $H$.
In particular, they asked to determine the uniform Tur\'an densities of $K_4^{(3)-}$ and $K_4^{(3)}$.
After more than 30 years,
the former was solved in [Israel J. Math. 211 (2016), 349--366] and [J. Eur. Math. Soc. 20 (2018), 1139--1159],
while the latter still remains open.
Till today, there are known constructions of $3$-uniform hypergraphs with uniform Tur\'an density
equal to $0$, $1/27$, $4/27$ and $1/4$ only.
We extend this list by a fifth value:
we prove an easy to verify sufficient condition for the uniform Tur\'an density to be equal to $8/27$ and
identify hypergraphs satisfying this condition.
\end{abstract}

\section{Introduction}

Tur\'an problems, which ask for the minimum density threshold for the existence of a certain substructure,
are one of the most fundamental kind of problems in extremal combinatorics.
Formally, the Tur\'an density of a graph $G$ is the infimum over all $d$ such that
every sufficiently large $n$-vertex graph with $d\binom{n}{2}$ edges contains $G$ as a subgraph.
The classical theorems of Mantel~\cite{Man07} and Tur\'an~\cite{Tur41} determine the Tur\'an density of complete graphs.
For general graphs,
Erd\H os and Stone~\cite{ErdS46} proved that
the Tur\'an density of any $r$-chromatic graph is equal to $\frac{r-2}{r-1}$, see also~\cite{ErdS66}.
While the situation is well-understood in the graph setting,
the analogous questions in the hypergraph setting are among the most challenging problems in extremal combinatorics.
Erd\H os~\cite{Erd81} offered \$1\,000 for determining the Tur\'an density of all complete $k$-uniform hypergraphs for $k\ge 3$ and
\$500 for determining the Tur\'an density of any single complete $k$-uniform hypergraph (with at least $k+1$ vertices).
Even determining the Tur\'an density of $K_4^{(3)}$, the complete $3$-uniform hypergraph with $4$ vertices,
has resisted all attempts for its resolution since its formulation 80 years ago~\cite{Tur41};
we refer to~\cite{FraF84,ChuL99,Raz10} for partial and related results, and
also to the surveys by Keevash~\cite{Kee11} and Sidorenko~\cite{Sid95}.

Most of the known and conjectured extremal constructions for Tur\'an problems have large independent sets,
i.e., linear-size sets of vertices with no edges.
This led Erd\H os and S\'os~\cite{ErdS82,Erd90} to propose studying Tur\'an problems
with the additional requirement that the edges of the host hypergraph are distributed uniformly.
Formally,
the \emph{uniform Tur\'an density} of a $k$-uniform hypergraph $H$
is the infimum over all $d$ such that for every $\varepsilon>0$,
there exists $n_0$ such that the following holds:
every $k$-uniform hypergraph $H_0$ with $n\ge n_0$ vertices such that
any subset of $n'\ge\varepsilon n$ vertices of $H_0$ spans at least $d\binom{n'}{k}$ edges
contains $H$ as a subgraph.
So, unlike in the case of Tur\'an problems,
uniform Tur\'an problems require host hypergraphs to possess the required edge density on all linear-size vertex subsets.
We remark that the notion is interesting for $k$-uniform hypergraphs with $k\ge 3$ only,
since it can be shown that the uniform Tur\'an density of every graph is equal to $0$ (this follows e.g.~from~\cite[Theorem 1]{Rod86}).

Erd\H os and S\'os also asked to determine the uniform Tur\'an density of the two smallest non-trivial $3$-uniform hypergraphs:
the complete $3$-uniform hypergraph $K_4^{(3)}$ with four vertices and
the $3$-uniform hypergraph $K_4^{(3)-}$, which is the hypergraph $K_4^{(3)}$ with an edge removed.
Determining the uniform Tur\'an density of $K_4^{(3)}$ remains a challenging open problem 
though it is believed that a 35-year-old construction of R\"odl~\cite{Rod86} showing that
the uniform Tur\'an density of $K_4^{(3)}$ is at least $1/2$ is optimal~\cite{Rei20}.
On a positive note,
the uniform Tur\'an density of $K_4^{(3)-}$ was shown to be equal to $1/4$
by Glebov, Volec and the third author~\cite{GleKV16} using arguments based on the flag algebra method of Razborov~\cite{Raz07} and
by Reiher, R\"odl and Schacht~\cite{ReiRS18a} using direct combinatorial arguments.

In addition to $K_4^{(3)-}$, there are very restricted families of $3$-uniform hypergraphs
whose uniform Tur\'an density is known exactly.
Reiher, R\"odl and Schacht~\cite{ReiRS18} classified $3$-uniform hypergraphs with uniform Tur\'an density equal to $0$.
The first, third and fifth authors~\cite{GarKL}
constructed a family of $3$-uniform hypergraphs with uniform Tur\'an density equal to $1/27$.
Finally,
Buci\'c, Cooper, Mohr, Munh{\'a} Correia and the third author~\cite{BucCKMM23} determined the uniform Tur\'an density
of $3$-uniform tight cycles of length at least five (the density is equal to $4/27$, if the length is not divisible by three, and
it is $0$ otherwise).
Lastly, the approach from~\cite{ReiRS18a} was extended to a broader family of hypergraphs with uniform Tur\'an density equal to $1/4$~\cite{CheS22,LiLWZ23}.
Hence there are only four values known to be the uniform Tur\'an density of some hypergraph, and those are $0$, $1/27$, $4/27$ and $1/4$.
For further exposition,
we refer the reader to the survey by Reiher~\cite{Rei20} on uniform Tur\'an densities of hypergraphs,
which also includes results from e.g.~\cite{ReiRS16,ReiRS18b,ReiRS18c} on stronger notions of uniform density.

Our main result is an easy to verify sufficient condition for a $3$-uniform hypergraph $H$ to have uniform Tur\'an density equal to $8/27$.
We then identify $3$-uniform hypergraphs $H$ that satisfy this condition, and so
we add $8/27$ to the list of known uniform Tur\'an densities.
Since we mostly deal with $3$-uniform hypergraphs, for the rest of the paper we drop the adjective $3$-uniform when we talk about $3$-uniform hypergraphs except for those places where
we wish to emphasize that a considered hypergraph is $3$-uniform in the interest of clarity (e.g.,
in the statements of lemmas and theorems).

To state our condition, we need the notion of a palette,
which is inspired by the construction of R\"odl from~\cite{Rod86};
we also refer the reader to the survey~\cite{Rei20} for further details on the notion.
A \emph{palette} is a set of ordered triples of colors.
For example, the simplest palette is the palette $\Phi_0=\{(\alpha,\beta,\gamma)\}$,
which consists of a single triple of mutually distinct colors denoted by $\alpha$, $\beta$ and $\gamma$.
More complex examples of palettes are the palettes $\Phi_3$ and $\Phi_8$,
which are defined below before the statement of Theorem~\ref{thm:main}.
If $\Phi$ is a palette,
we say that an $n$-vertex hypergraph $H$ is \emph{$\Phi$-colorable}
if there exists an ordering $v_1,\ldots,v_n$ of the vertices of $H$ and
a coloring of pairs of the vertices of $H$ with colors appearing in the palette $\Phi$ such that
if the vertices $v_i$, $v_j$ and $v_k$, $1\le i<j<k\le n$, form an edge of $H$,
then the triple $(c_{ij},c_{jk},c_{ik})$ is contained in $\Phi$
where $c_{ij}$ is the color of the pair $v_i$ and $v_j$,
$c_{jk}$ is the color of the pair $v_j$ and $v_k$, and
$c_{ik}$ is the color of the pair $v_i$ and $v_k$.

Having defined the notions of a palette and a $\Phi$-colorable hypergraph,
we can state the above mentioned characterization of hypergraphs with the uniform Tur\'an density equal to $0$.

\begin{theorem}[{Reiher, R\"odl and Schacht~\cite{ReiRS18}}]
\label{thm:zero}
The uniform Tur\'an density of a $3$-uniform hypergraph $H$ is equal to $0$ if and only if
$H$ is $\Phi_0$-colorable.
\end{theorem}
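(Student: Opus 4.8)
The plan is to prove the two implications by rather different means: ``$\Phi_0$-colorable $\Rightarrow$ uniform Tur\'an density $0$'' by an embedding argument, and its contrapositive ``not $\Phi_0$-colorable $\Rightarrow$ positive uniform Tur\'an density'' by an explicit construction. I would present the construction first, since the colouring idea it uses reappears in the embedding argument.

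\textbf{The construction.} Fix $d<1/27$. For large $N$ I would take a random $3$-uniform hypergraph $H_0$ on $\{1,\dots,N\}$: colour each pair $\{i,j\}$ with $i<j$ independently and uniformly from $\{\alpha,\beta,\gamma\}$, and put $\{i,j,k\}$ with $i<j<k$ into $H_0$ exactly when $\{i,j\}$ received colour $\alpha$, $\{j,k\}$ received $\beta$, and $\{i,k\}$ received $\gamma$. Then $\mathbb{E}\,e(H_0[U])=\tfrac1{27}\binom{|U|}{3}$ for every $U$, and recolouring a single pair changes $e(H_0[U])$ by at most $N$; so the bounded-differences inequality gives $\Pr\big[e(H_0[U])<d\tbinom{|U|}{3}\big]\le\exp(-\Omega(N^2))$ once $|U|\ge\varepsilon N$, which beats the $2^N$ union bound, so that with high probability every linear-size vertex subset of $H_0$ spans at least a $d$-fraction of its triples. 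Yet $H_0$ is $H$-free no matter how the colours fall: a copy $\varphi\colon V(H)\to\{1,\dots,N\}$ of $H$ would yield, after ordering $V(H)$ by the values of $\varphi$ and colouring each pair of $V(H)$ by the colour of its $\varphi$-image, a $\Phi_0$-colouring of $H$, which does not exist. Consequently no $d<1/27$ has the property required in the definition of the uniform Tur\'an density, so that density is at least $1/27$.

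\textbf{The embedding.} Here I would show that for every $d>0$ and $\varepsilon>0$, a sufficiently large hypergraph $H_0$ all of whose $\ge\varepsilon n$-vertex subsets have density at least $d$ must contain $H$. The first step is to invoke the regularity method to replace $H_0$ by a \emph{reduced hypergraph}: an index set $I$, which can be made to exceed any prescribed size by taking $H_0$ large and the partition fine, together with a bounded family $\mathcal{P}^{ij}$ of regular bipartite graphs for each pair $\{i,j\}\subseteq I$ and a $3$-partite $3$-graph $\mathcal{A}^{ijk}$ on the classes $\mathcal{P}^{ij},\mathcal{P}^{ik},\mathcal{P}^{jk}$ for each triple. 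Two facts are needed: an embedding (counting) lemma reducing the goal to finding a \emph{trace} of $H$ in the reduced hypergraph --- distinct indices $\iota_1,\dots,\iota_n\in I$ and elements $P^{ab}\in\mathcal{P}^{\iota_a\iota_b}$ for all $a<b$ in $[n]$ with $(P^{ab},P^{ac},P^{bc})$ an edge of $\mathcal{A}^{\iota_a\iota_b\iota_c}$ for every edge $\{a,b,c\}$ of $H$ (ordered $a<b<c$ by the ordering witnessing $\Phi_0$-colorability); and the statement that the uniform density of $H_0$, after an averaging argument, forces every $\mathcal{A}^{ijk}$ to be of positive density, at least once a small fraction of indices is discarded.

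To build the trace I would use the $\Phi_0$-colouring of $H$ as a template. Fix it, say $v_1\prec\dots\prec v_n$ with pair-colouring $c$. Because $|I|$ may be taken arbitrarily large and the $\mathcal{P}^{ij}$ and $\mathcal{A}^{ijk}$ have bounded complexity, a Ramsey argument produces a large $I'\subseteq I$ on which all pairs and all triples agree under a common labelling of the classes, so there is a single ``pattern cell'' that is an edge of $\mathcal{A}^{ijk}$ for every $\{i,j,k\}\subseteq I'$ (it exists because the triads have positive density). Now choose $\iota_1<\dots<\iota_n$ in $I'$, match the three colours $\alpha,\beta,\gamma$ once and for all to the three coordinates of the pattern cell according to the slot each colour occupies inside an edge of $H$, and let $P^{ab}$ be the element of $\mathcal{P}^{\iota_a\iota_b}$ selected by $c(\{v_a,v_b\})$. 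Since $\Phi_0=\{(\alpha,\beta,\gamma)\}$ is a single triple, every edge of $H$ carries its three pairs through colours $\alpha$, $\beta$, $\gamma$ in precisely the positions assigned to the pattern cell, so each edge of $H$ lands on that cell and the trace is valid; the embedding lemma then lifts it to a copy of $H$ in $H_0$. This last paragraph is exactly where $\Phi_0$-colorability is used, and the only obstruction to running it for an arbitrary $H$ is the possible non-existence of a consistent colouring $c$ --- mirroring the construction in the other direction. The hard part, I expect, is the step turning ``$H_0$ is uniformly dense'' into ``every triad of the reduced hypergraph is dense''; once that is established, the $\Phi_0$-structure carries the rest with only a routine Ramsey homogenization.
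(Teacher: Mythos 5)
The paper does not prove this statement; it is quoted from Reiher, R\"odl and Schacht and cited as such, so there is no internal proof to check your sketch against. What the paper \emph{does} do is apply the same machinery to prove its own Theorem~\ref{thm:upper}, so comparison with that is natural.

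Your lower bound is exactly Proposition~\ref{prop:lower} specialised to $\Phi_0$: the random palette colouring plus McDiarmid's inequality gives uniform density close to $1/27$, and a copy of $H$ would induce a $\Phi_0$-colouring of $H$. This is sound.

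For the embedding direction, your route and the paper's route share the decomposition into (i) a regularity black box and (ii) a Ramsey-flavoured selection of cells in the reduced object, but they diverge at the level of framework. You work in the classical RRS ``reduced hypergraph,'' where each $\mathcal{P}^{ij}$ is a \emph{bounded} family of graphs; this boundedness is what lets you 3-colour $\binom{I}{3}$ with $2^{t^3}$ colours (recording which labelled triples of cells form edges of $\mathcal{A}^{ijk}$), homogenise by Ramsey, and pull out a single nonempty pattern cell $(P_0,Q_0,R_0)$. Assigning $P_0,Q_0,R_0$ to the colours $\alpha,\beta,\gamma$ and reading off the $\Phi_0$-colouring of $H$ then gives a valid trace. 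That argument is genuinely cleaner than the iterative intersection lemmas the paper uses (Lemmas~\ref{lm:inter_ij}--\ref{lm:inter_first}), and the simplification is specific to $\Phi_0$ being a singleton palette: there is only one triple to land on, so a one-shot Ramsey extraction suffices. The paper cannot do this for $\Phi_3$ (seven distinct colours, three distinct patterns) and, more importantly, cannot do it at all in the partitioned-hypergraph formulation of Theorem~\ref{thm:reiher}, where the parts $V_{ij}$ are unbounded and there is no finite colour set to homogenise; the intersection lemmas are Ramsey arguments over the \emph{patterns of intersections of linear-size sets} rather than over bounded label sets, precisely to avoid any boundedness hypothesis. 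So your approach and the paper's iterative one are not interchangeable without reintroducing the bounded-families formulation.

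The one genuine gap, which you flag yourself, is the passage from ``$H_0$ is uniformly dense'' to ``the reduced hypergraph has dense triads after discarding a few indices together with the right order of quantifiers on $|I|$ versus $t$.'' That is exactly the content that this paper outsources to Theorem~\ref{thm:reiher}, and in the original RRS paper it is the bulk of the technical work; in a complete write-up you would need either to prove a version of it or to cite it explicitly, as the remainder of your sketch does not supply it.
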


Palettes also provide a method for lower bound constructions,
which we present next;
also see the survey~\cite{Rei20}.
We remark that a recent result of the last author~\cite{Lam},
which is discussed in the concluding section,
asserts that colorability by palettes fully determines uniform Tur\'an densities.
To present the lower bound construction method using palettes,
we need an additional definition:
the \emph{density} of a palette $\Phi$,
which consists of ordered triples of colors,
is $|\Phi|/k^3$ where $k$ is the number of colors appearing in the triples of $\Phi$.
For example, the density of the palette $\Phi_0$ is $1/27$.

Fix a palette $\Phi$ with density $d$ and an integer $n$.
Let $k$ be the number of colors appearing in the triples of $\Phi$.
We will construct an $n$-vertex hypergraph $H_n$ with vertices $v_1,\ldots,v_n$ as follows.
Color each pair of vertices uniformly at random with one of the $k$ colors appearing in the triples of $\Phi$ and
include an edge formed by vertices $v_i$, $v_j$ and $v_k$, $1\le i<j<k\le n$,
if the triple $(c_{ij},c_{jk},c_{ik})$ is contained in $\Phi$
where $c_{ij}$ is the color of the pair $v_i$ and $v_j$,
$c_{jk}$ is the color of the pair $v_j$ and $v_k$, and
$c_{ik}$ is the color of the pair $v_i$ and $v_k$.
It is not hard to show that for every $\varepsilon>0$ and $\delta>0$,
there exists an integer $n$ such that the following holds with positive probability:
every subset of $n'\ge\varepsilon n$ vertices of $H_n$ spans at least $(d-\delta)\binom{n'}{3}$ edges.
This construction yields that
the uniform Tur\'an density of any hypergraph that is not $\Phi$-colorable must be at least the density of $\Phi$.
We state this conclusion as a proposition.

\begin{proposition}
\label{prop:lower}
Let $\Phi$ be a palette.
If a $3$-uniform hypergraph $H$ is not $\Phi$-colorable,
then the uniform Tur\'an density of $H$ is at least the density of $\Phi$.
\end{proposition}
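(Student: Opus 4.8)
The plan is to turn the random construction outlined above into a rigorous argument and to verify its two properties: that the construction avoids $H$, and that it is ``uniformly dense'' up to an arbitrarily small error. Fix a palette $\Phi$ with density $d$ and let $k$ be the number of colors appearing in its triples. For an integer $n$, let $H_n$ be the random hypergraph on $v_1,\dots,v_n$ obtained by coloring each of the $\binom{n}{2}$ pairs independently and uniformly at random with one of the $k$ colors and by including $\{v_i,v_j,v_k\}$ with $i<j<k$ as an edge exactly when $(c_{ij},c_{jk},c_{ik})\in\Phi$. The first step, which holds deterministically, is to note that $H_n$ contains no copy of $H$: any embedding of $H$ into $H_n$ pulls the ordering $v_1,\dots,v_n$ back to an ordering of $V(H)$ and pulls the random coloring back to a coloring of the pairs of $H$ by colors of $\Phi$, and since every edge of $H$ maps to an edge of $H_n$, this coloring witnesses that $H$ is $\Phi$-colorable, contrary to assumption.

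The heart of the matter is the density condition. Fix $\varepsilon,\delta>0$ and a vertex set $S$ with $|S|=n'\ge\varepsilon n$. A fixed triple inside $S$ becomes an edge with probability $|\Phi|/k^3=d$, because the colors of its three pairs are independent and uniform over the $k$ colors, so $\mathbb{E}[e(S)]=d\binom{n'}{3}$. The edge indicators of distinct triples are correlated, but $e(S)$, viewed as a function of the $\binom{n'}{2}$ pair colors inside $S$, changes by fewer than $n'$ whenever a single pair is recolored, as only triples through that pair are affected. Applying McDiarmid's bounded-differences inequality gives
\[
\Pr\!\left[e(S)\le(d-\delta)\binom{n'}{3}\right]\le\exp\!\left(-\frac{2\delta^2\binom{n'}{3}^{2}}{\binom{n'}{2}(n')^{2}}\right)\le\exp\!\left(-c\,\delta^{2}(n')^{2}\right)
\]
for an absolute constant $c>0$ and all sufficiently large $n'$, hence at most $\exp(-c\,\delta^2\varepsilon^2 n^2)$ because $n'\ge\varepsilon n$.

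A union bound over the at most $2^n$ candidate sets $S$ bounds the probability that some subset on at least $\varepsilon n$ vertices spans fewer than $(d-\delta)\binom{n'}{3}$ edges by $2^n\exp(-c\,\delta^2\varepsilon^2 n^2)$, which drops below $1$ once $n$ is large. Intersecting this with the probability-one event that $H_n$ omits $H$, we obtain, for every $\varepsilon,\delta>0$ and all sufficiently large $n$, a fixed $n$-vertex hypergraph in which every $n'\ge\varepsilon n$ vertices span at least $(d-\delta)\binom{n'}{3}$ edges and which contains no copy of $H$. Given any $d'<d$, taking $\delta=d-d'$ shows that $d'$ does not satisfy the condition in the definition of the uniform Tur\'an density of $H$; since this holds for every $d'<d$, the uniform Tur\'an density of $H$, being the infimum over the valid thresholds, is at least $d$, the density of $\Phi$.

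I expect the only genuinely delicate point to be the simultaneous control of all linear-size subsets. A second-moment computation gives only a polynomially small per-subset failure probability, which is far too weak to survive a union bound over exponentially many subsets; an exponential tail bound such as McDiarmid's inequality (or, alternatively, a sufficiently high moment estimate) is what makes the argument go through. The remaining steps are routine manipulations of the definitions.
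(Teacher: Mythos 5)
Your proposal is correct and follows the same construction that the paper sketches (coloring pairs uniformly at random and including the triple as an edge iff its color pattern lies in $\Phi$): the deterministic pullback argument showing $H_n$ avoids $H$, together with a concentration-plus-union-bound argument for uniform density, is exactly what the paper asserts when it says ``it is not hard to show.'' The paper leaves the concentration step unproved; your use of McDiarmid's bounded-differences inequality is the standard way to make it rigorous, and your computation of the exponent and the union bound over the $2^n$ subsets are both correct.
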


Using Proposition~\ref{prop:lower} applied to the palette $\Phi_0$,
we derive from Theorem~\ref{thm:zero} that
the uniform Tur\'an density of any hypergraph $H$ is either $0$ or at least $1/27$ (recall that
the construction of hypergraphs with uniform Tur\'an density equal to $1/27$ was given in~\cite{GarKL},
so this jump value is the best possible).

To state our main result, we need the following two palettes,
which we fix for the rest of the paper (the subscripts denote
the number of triples contained in the palettes).
\begin{align*}
\Phi_3 & = \{(\alpha^1,\beta^1,\omega),(\alpha^2,\omega,\gamma^2),(\omega,\beta^3,\gamma^3)\}\\
\Phi_8 & = \{(x,y,z) \mbox{ such that } x\in\{\beta,\gamma\}, y\in\{\alpha,\gamma\} \mbox{ and } z\in\{\alpha,\beta\}\;\}
\end{align*}
Note that the density of the palette $\Phi_8$ is $8/27$.

We are now ready to state our main result,
which provides a condition that
guarantees that the uniform Tur\'an density of a hypergraph is equal to $8/27$.

\begin{theorem}
\label{thm:main}
The uniform Tur\'an density of every $\Phi_3$-colorable $3$-uniform hypergraph that is not $\Phi_8$-colorable is $8/27$.
\end{theorem}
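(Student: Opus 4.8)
The plan is to establish the two inequalities separately. The lower bound $\pi_{\scriptscriptstyle\mathrm{u}}(H)\ge 8/27$ is immediate from Proposition~\ref{prop:lower} applied to the palette $\Phi_8$: since $H$ is not $\Phi_8$-colorable and the density of $\Phi_8$ is $8/27$, the uniform Tur\'an density of $H$ is at least $8/27$. The entire content of the theorem therefore lies in the matching upper bound: every sufficiently large hypergraph all of whose linear-size subhypergraphs have density at least $8/27+\varepsilon$ must contain every fixed $\Phi_3$-colorable $H$. The natural tool here is the reduced-hypergraph / regularity machinery for uniform Tur\'an densities developed by Reiher, R\"odl and Schacht (and surveyed in~\cite{Rei20}): passing to a regular partition, one obtains a "reduced hypergraph" on the index set of the partition whose edges record the triples of dense, regular triads, and it suffices to find a suitable homomorphic-type image of $H$ into this reduced structure. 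Concretely, I would show that any reduced hypergraph with density bounded away from $8/27$ contains a specific finite "template" configuration, and that the presence of this template lets one embed every $\Phi_3$-colorable hypergraph.

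The heart of the argument is thus a Ramsey-type statement about $\Phi_3$. The key observation is the structure of $\Phi_3=\{(\alpha^1,\beta^1,\omega),(\alpha^2,\omega,\gamma^2),(\omega,\beta^3,\gamma^3)\}$: each of its three triples contains the distinguished color $\omega$ in exactly one of the three coordinates, and in a $\Phi_3$-coloring of an edge $v_i v_j v_k$ with $i<j<k$ the color $\omega$ appears on exactly one of the three pairs — either $v_iv_k$ (the "long" pair, first triple), or $v_jv_k$ (second triple), or $v_iv_j$ (third triple). I would first argue, by a density-increment or direct counting argument on the reduced hypergraph, that if the density stays above $8/27$ then in the reduced hypergraph one can find a large "core" set of vertices together with an ordering and a partial $2$-coloring of the pairs into an "$\omega$-like" class and a "non-$\omega$" class, such that the non-$\omega$ triads already realize (a blow-up of) the palette $\Phi_8$-forbidden pattern is avoided — meaning the non-$\omega$ part behaves like the $\Phi_8$ construction, and the extra density beyond $8/27$ forces enough $\omega$-triads in each of the three coordinate positions. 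Once such a structured sub-configuration of unbounded size is located, embedding $H$ is routine: take a $\Phi_3$-coloring of $H$ with vertex order $v_1,\dots,v_n$ and pair-colors $c_{ij}$, map the vertices of $H$ into the core respecting the order, route each pair colored $\omega$ to an $\omega$-triad in the correct coordinate position and each pair colored $\alpha^s,\beta^s,\gamma^s$ to an appropriate non-$\omega$ triad, and use regularity (a standard embedding/counting lemma for reduced hypergraphs) to complete the embedding of all $\binom{n}{3}$ edges simultaneously.

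In carrying this out I would proceed in the following order. First, set up the reduced-hypergraph framework and state precisely the embedding lemma that reduces the problem to finding the template in any reduced hypergraph of density $>8/27$ (this is black-boxed from the literature). Second, formulate the template: a sequence of levels $L_1,L_2,L_3$ with the combinatorial incidence pattern forced by $\Phi_3$, quantified so that it contains blow-ups of $\Phi_3$ of every finite size. Third, and this is the crux, prove that density exceeding $8/27$ forces the template — here I would argue that the "$\Phi_8$-part" of any reduced hypergraph, i.e.\ the sub-hypergraph on triads that do not use an $\omega$-coordinate, has density at most $8/27$ because those triads are $\Phi_8$-colorable in the index structure, so the surplus density must come from $\omega$-triads, and a pigeonhole over the three coordinate positions plus an extraction of a large "consistent" sub-structure yields the template. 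Fourth, assemble the embedding of $H$. I expect the third step to be the main obstacle: one must show not merely that many $\omega$-triads exist, but that they are distributed across the three coordinate positions in a way compatible with a single vertex ordering, and that the non-$\omega$ triads simultaneously provide the $\Phi_8$-structure needed to route the remaining colors — this is exactly where the hypothesis "$H$ is $\Phi_3$-colorable but not $\Phi_8$-colorable" is used, the non-$\Phi_8$-colorability being what prevents the density from dropping to $8/27$ and the $\Phi_3$-colorability being what the template is tailored to receive.
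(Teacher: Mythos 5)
Your high-level decomposition is correct: the lower bound follows from Proposition~\ref{prop:lower} applied to $\Phi_8$, and the upper bound reduces via Theorem~\ref{thm:reiher} (the partitioned/reduced hypergraph framework) to embedding $H$ in any partitioned hypergraph of density $>8/27$. You also correctly isolate the structure of $\Phi_3$ --- the distinguished color $\omega$ occurring once per triple, in a different coordinate each time --- as the thing that has to be realized in the host. However, your ``crux'' step does not work as described, and you have misallocated one of the hypotheses.

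First, the density argument. You propose to bound the ``$\Phi_8$-part'' of the reduced hypergraph (triads that ``do not use an $\omega$-coordinate'') by $8/27$ and attribute the surplus to $\omega$-triads. But the reduced hypergraph comes with no $\omega$-labelling; there is no a priori dichotomy between $\omega$- and non-$\omega$-triads to exploit, and ``those triads are $\Phi_8$-colorable in the index structure'' is not a statement that can be made before the relevant structure is built. The actual mechanism in the paper is different and much more elementary: using Lemma~\ref{lm:profile}, one passes to a large index set on which, for every triad, roughly an $a$-fraction of $V_{ij}$, a $b$-fraction of $V_{jk}$, and a $c$-fraction of $V_{ik}$ are ``heavy'' (have degree $\ge\varepsilon$ into the triad). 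Density $>8/27$ forces $abc>8/27$, and then AM--GM gives $a+b+c>2$, whence for each pair $(i,j)$ the three heavy sets $A,B,C\subseteq V_{ij}$ (heavy towards a left, middle, and right index) have linear-size intersection. This is precisely where $8/27=(2/3)^3$ enters. From that intersection one extracts $\omega_{ij}$ via Lemma~\ref{lm:inter_first}, and then $\alpha^1,\beta^1$, then $\alpha^2,\gamma^2$, then $\beta^3,\gamma^3$ via repeated applications of Lemmas~\ref{lm:inter_ij}, \ref{lm:inter_jk}, \ref{lm:inter_ik}. No $\Phi_8$-structure is invoked anywhere in the upper bound.

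Second, and relatedly, you say the non-$\Phi_8$-colorability of $H$ is used in the embedding step, to ``prevent the density from dropping to $8/27$.'' This is not so: the paper's Theorem~\ref{thm:upper} states that \emph{every} $\Phi_3$-colorable hypergraph, $\Phi_8$-colorable or not, has uniform Tur\'an density at most $8/27$; the embedding makes no reference to $\Phi_8$. The non-$\Phi_8$-colorability is used solely in the lower bound via Proposition~\ref{prop:lower}. Until you locate the AM--GM step that converts $abc>8/27$ into $a+b+c>2$ and then into a common heavy vertex, and decouple the two hypotheses into their separate roles, the proposal has a genuine gap at its central step.
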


Since the density of the palette $\Phi_8$ is $8/27$,
Theorem~\ref{thm:main} follows from Proposition~\ref{prop:lower} and
Theorem~\ref{thm:upper}, which is proven in Section~\ref{sec:upper}.
In Section~\ref{sec:construct},
we present a construction of hypergraphs that are $\Phi_3$-colorable but not $\Phi_8$-colorable,
which establishes the existence of hypergraphs with uniform Tur\'an density equal to $8/27$.

\section{Preliminaries}

In this section, we introduce the terminology related to partitioned hypergraphs.
This framework, which is presented by Reiher in the survey~\cite{Rei20},
encapsulates hypergraph regularity arguments
while avoiding some of the technicalities of a direct application of the Hypergraph Regularity Lemma.

An \emph{$N$-partitioned hypergraph $H$} is a $3$-uniform hypergraph such that
its vertex set is partitioned to sets $V_{ij}$, $1\le i<j\le N$, and
for every edge $e$ of $H$, there exist indices $1\le i<j<k\le N$ such that
one vertex of $e$ is from $V_{ij}$, one from $V_{jk}$ and one from $V_{ik}$.
An \emph{$(i,j,k)$-triad} is the set of all edges of $H$ that
have exactly one vertex in each of the sets $V_{ij}$, $V_{jk}$ and $V_{ik}$;
note that each edge of $H$ belongs to exactly one triad.
If $v$ is a vertex of $V_{ij}$,
we write $d_{ij\to k}(v)$ for the number of edges of the $(i,j,k)$-triad that contain $v$ divided by $|V_{ik}|\cdot|V_{jk}|$;
we use $d_{jk\to i}(v)$ and $d_{ik\to j}(v)$ analogously.
Finally,
the \emph{density} of an $(i,j,k)$-triad is the number of edges forming the triad divided by $|V_{ij}|\cdot|V_{ik}|\cdot|V_{jk}|$, and
the \emph{density} of an $N$-partitioned hypergraph $H$ is the minimum density of a triad of $H$.

An $N$-partitioned hypergraph $H_0$ with parts $V_{ij}$, $1\le i<j\le N$,
\emph{embeds} an $n$-vertex hypergraph $H$ with vertices $v_1,\ldots,v_n$
if it is possible to choose distinct $1\le a_1,\ldots,a_n\le N$ corresponding to the vertices of $H$ and
vertices $w_{ij}\in V_{a_ia_j}$ for $1\le i<j\le n$ such that
if vertices $v_i$, $v_j$ and $v_k$ of $H$ form an edge,
then $\{w_{ij},w_{jk},w_{ik}\}$ is an edge in the $(a_i,a_j,a_k)$-triad of $H_0$.
Reiher~\cite{Rei20} provided a general statement that
reduces proving an upper bound on the uniform Tur\'an density of a hypergraph $H$
to embedding $H$ in partitioned hypergraphs.

\begin{theorem}[{Reiher~\cite[Theorem 3.3]{Rei20}}]
\label{thm:reiher}
Let $H$ be a $3$-uniform hypergraph and $d\in [0,1]$.
Suppose that for every $\delta>0$ there exists $N$ such that
every $N$-partitioned hypergraph with density at least $d+\delta$ embeds $H$.
Then, the uniform Tur\'an density of $H$ is at most $d$.
\end{theorem}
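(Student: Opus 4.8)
I would prove this via the hypergraph regularity method, so that the ``partitioned hypergraph'' in the statement becomes exactly the reduced hypergraph of a regular partition of the host. Unwinding the definition of uniform Tur\'an density, it suffices to fix $\mu>0$ and $\varepsilon>0$ and to show that every sufficiently large $3$-uniform hypergraph $H_0$ on $n$ vertices in which every set of $n'\ge\varepsilon n$ vertices spans at least $(d+\mu)\binom{n'}{3}$ edges contains a copy of $H$. Put $\delta:=\mu/2$ and let $N:=N(\delta)$ be the integer provided by the hypothesis, so that every $N$-partitioned hypergraph of density at least $d+\delta$ embeds $H$.

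First I would run the $3$-uniform hypergraph regularity lemma on $H_0$, with all regularity parameters chosen fine and the number of parts chosen large in terms of $N$ and $\delta$. This yields a partition of $V(H_0)$ into many vertex classes together with, for each pair of classes, a partition of the bipartite graph between them into a bounded number of sufficiently regular bipartite graphs, such that all but a negligible fraction of the triads---triples of such bipartite graphs, one over each of the three pairs spanned by a triple of vertex classes---are regular with respect to $H_0$. From this data I would build a reduced $N$-partitioned hypergraph $\mathcal R$: select $N$ of the vertex classes, let the part $V_{ij}$ be the set of bipartite graphs used between the $i$-th and $j$-th selected classes, and declare a triple of such bipartite graphs (one from $V_{ij}$, one from $V_{jk}$, one from $V_{ik}$) to be an edge of $\mathcal R$ exactly when the corresponding triad of $H_0$ is regular and has relative density at least $d+\delta$.

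The heart of the argument is to certify that, after discarding a bounded number of vertex classes and of bipartite graphs within parts, the surviving reduced hypergraph (still $N'$-partitioned for some $N'$ that I may assume to be the $N(\delta)$ I am entitled to pass to) has density at least $d+\delta$; equivalently, that for every triple of the selected vertex classes at least a $(d+\delta)$-proportion of the triads over it are dense regular triads. This is the step where the full strength of the \emph{uniform} density hypothesis on $H_0$ is used---not just that $V(H_0)$ is $(d+\mu)$-dense, but that \emph{every} linear-size vertex subset is: if too few triads over some triple of classes were dense, then a suitable counting using the regularity of the underlying bipartite graphs would exhibit a linear-size subset of $V(H_0)$ with edge density below $d+\mu$, a contradiction; iterating this clean-up over the boundedly many classes and bipartite-graph indices removes all offending triads while keeping $N'$ large. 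I expect this transference of the density condition---controlling the \emph{minimum} triad density, and carefully accounting for the degenerate (non-transversal) edges inside a union of three vertex classes---to be the main obstacle.

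Once $\mathcal R$ is known to have density at least $d+\delta$, the hypothesis supplies an embedding of $H$ into $\mathcal R$: distinct indices $a_1,\dots,a_n$ and, for every $1\le i<j\le n$, a bipartite graph $w_{ij}\in V_{a_ia_j}$ such that for each edge $v_iv_jv_k$ of $H$ the triple $\{w_{ij},w_{jk},w_{ik}\}$ is an edge of the $(a_i,a_j,a_k)$-triad of $\mathcal R$---that is, $w_{ij},w_{jk},w_{ik}$ form a regular triad in which $H_0$ has density at least $d+\delta>0$. Feeding this template into the dense counting (embedding) lemma for $3$-uniform hypergraphs, which guarantees that a system of regular triads of positive density supports essentially the expected number of copies of any fixed configuration, produces one vertex of $H_0$ from each of the classes $a_1,\dots,a_n$ so that all the prescribed triples are genuine edges, i.e.\ a copy of $H$ in $H_0$. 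Apart from the density transference above, the only nontrivial ingredient is this (standard) counting lemma, so the quantitative choices of regularity parameters can be made to fit together in the usual way.
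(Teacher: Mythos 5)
Note first that the paper does not prove Theorem~\ref{thm:reiher}: it is imported as a black box from Reiher's survey~\cite{Rei20}, so there is no internal proof to compare against, and your proposal must be assessed on its own terms. The overall skeleton you describe---regularize $H_0$, pass to a reduced partitioned hypergraph $\mathcal R$, invoke the embedding hypothesis, and pull the template back via a counting lemma---is indeed the right frame, and it matches the machinery (Lemma~\ref{lm:profile} together with Theorem~\ref{thm:reiher}) that the present paper builds on.

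There are, however, two concrete problems in your sketch of the density transference step. First, your definition of $\mathcal R$ is wrong: you take a triple of bipartite graphs to be an edge of $\mathcal R$ precisely when the corresponding triad has relative density at least $d+\delta$. But even granting that the \emph{average} triad density over a triple of classes is at least $d+\mu$, Markov only gives that the fraction of triads with density $\ge d+\delta$ is at least $(\mu-\delta)/(1-d-\delta)$, which is typically far smaller than $d+\delta$; so the reduced hypergraph you construct need not have density $\ge d+\delta$ as a partitioned hypergraph, and the embedding hypothesis cannot be invoked. The fix is to threshold at a small $\eta=\eta(\mu)>0$: the fraction of triads of density $\ge\eta$ is then at least $d+\mu-\eta-o(1)\ge d+\delta$, and the counting lemma is happy because it only needs the triads in the image to have density bounded away from $0$, not above $d$. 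Second, you propose to certify the density of $\mathcal R$ by applying the uniform density assumption to linear-size subsets such as $V_i\cup V_j\cup V_k$; but there the non-transversal triples are a positive constant fraction ($7/9$) of all triples, over which you have no control, so the transversal density---and hence the average triad density---is simply not pinned down by the hypothesis applied to such a small union. The standard remedy is to first Ramsey-select a large index set $I$ on which the transversal densities and degree profiles are essentially constant (this is exactly what Lemma~\ref{lm:profile} in the present paper is for), and only then apply the uniform density hypothesis to $\bigcup_{i\in I}V_i$, where the non-transversal triples form only an $O(1/|I|)$-fraction. You flag both of these difficulties in passing (``carefully accounting for the degenerate edges,'' ``iterating this clean-up''), but they are the content of the theorem and as written your argument does not close them.
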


\noindent We remark that an extension of this general statement yields that
the uniform Tur\'an density of a hypergraph $H$ and any of its blowups are the same. 

We next state four lemmas given in~\cite[Section 4]{BucCKMM23}, which we cast using our terminology;
the lemmas were implicitly proven in~\cite{ReiRS18} using a direct iterative approach, and
alternative proofs based on Ramsey's Theorem can be found in~\cite{GarKL}.
The lemmas are instances of the following metastatement:
if a sufficiently large set $I_0\subseteq [N]$ is given and
every triad of an $N$-partitioned hypergraph $H$ with indices from $I_0$
has a linear-size set of ``good'' vertices (for instance such a property might be having a high degree in the triad),
then there is a large set of indices $I\subseteq I_0$ such that
there is a universal choice of vertices that are good with respect to all triads with indices from $I$.

The first of the lemmas, which we prove for illustrative purposes, is the instance of the metastatement
in the case when good vertices come from the set $V_{ik}$ of a $(i,j,k)$-triad.

\begin{lemma}[Buci\'c, Cooper, Kr\'al', Mohr and Munh\'a Correia~{\cite[Lemma 4.3]{BucCKMM23}}]
\label{lm:inter_ik}
For every $\varepsilon>0$ and $n$,
there exists $n_0$ such that
the following holds for every $N$-partitioned hypergraph $H$ with $N\ge n_0$,
every $I_0\subseteq\{1,\ldots,N\}$ with $|I_0|\ge n_0$, and
every choice of subsets $W_{ijk}\subseteq V_{ik}$, $i<j<k$, $i,j,k\in I_0$ such that $|W_{ijk}|\ge\varepsilon |V_{ik}|$:
there exist $I\subseteq I_0$ with $|I|\ge n$ and $\gamma_{ik}$, $i<k$, $i,k\in I$, such that
$\gamma_{ik}\in W_{ijk}$ for all $j\in I$ such that $i<j<k$.
\end{lemma}

\begin{proof}
Consider $\varepsilon>0$ and $n$, and
let $n_0$ be the Ramsey number such that 
every $2$-edge-colored $n$-uniform complete hypergraph with $n_0$ vertices
contains a monochromatic complete hypergraph with $K=\max\{n^2-n+1,\lceil n/\varepsilon\rceil+2\}$ vertices.
Fix an $N$-partitioned hypergraph $H$ with $N\ge n_0$,
the set $I_0\subseteq\{1,\ldots,N\}$ such that $|I_0|\ge n_0$, and
the subsets $W_{ijk}\subseteq V_{ik}$, $i<j<k$, $i,j,k\in I_0$ such that $|W_{ijk}|\ge\varepsilon |V_{ik}|$.

We now construct an auxiliary $2$-edge-colored $n$-uniform complete hypergraph with vertex set $I_0$ such that
the edge formed by indices $i_1<\cdots<i_n$, $i_1,\ldots,i_n\in I_0$, is colored blue
if the sets $W_{i_1,i_k,i_n}$, $k=2,\ldots,n-1$, have a non-empty intersection, and
it is colored red otherwise,
i.e., the intersection of the sets $W_{i_1,i_k,i_n}$, $k=2,\ldots,n-1$, is the empty set.
By the choice of $n_0$,
there exists a $K$-element set $J\subseteq I_0$ such that
all edges formed by the elements of $J$ have the same color.
Let $i_1<\cdots<i_K$ be the elements of $J$.

First suppose that the common color of the edges formed by the elements of $J$ is red.
By a simple averaging argument, the set $V_{i_1,i_K}$ contains a vertex that
is contained in at least $\frac{\varepsilon (K-2)|V_{i_1,i_K}|}{|V_{i_1,i_K}|}=\varepsilon (K-2)\ge n$ sets $W_{i_1,i_k,i_K}$, $k\in\{2,\ldots,K-1\}$;
let $x$ be any such vertex of $V_{i_1,i_K}$.
However,
any $n-2$ indices $i_k$, $k\in\{2,\ldots,K-1\}$, such that $x\in W_{i_1,i_k,i_K}$ together with $i_1$ and $i_K$
form a blue edge.
Hence, the common color of the edges formed by the elements of $J$ cannot be red.

We next show that the set $I=\{i_1,i_{n+1},i_{2n+1},\ldots,i_{(n-1)n+1}\}$ has the property from the statement of the lemma.
To do so, we need to find elements $\gamma_{i_a,i_b}$ for all $a,b\in\{1,n+1,\ldots,(n-1)n+1\}$ such that $a<b$.
Fix such $i_a$ and $i_b$.
Since the edge of the auxiliary complete hypergraph
formed by the $(b-a)/n+1\le n$ elements $i_a,i_{a+n},\ldots,i_b$ and
any $n-(b-a)/n-1$ elements among $a_{a+1},\ldots,a_{a+n-1}$ is blue,
the sets $W_{i_a,i_k,i_b}$, $k=a+n,\ldots,b-n$, have a non-empty intersection, and
we set $\gamma_{i_a,i_b}$ to be any element contained in their intersection.
\end{proof}

The next two lemmas are the instance of the metastatement in the cases
when good vertices come from the set $V_{ij}$ and the set $V_{jk}$ of a $(i,j,k)$-triad, respectively.

\begin{lemma}[Buci\'c, Cooper, Kr\'al', Mohr and Munh\'a Correia~{\cite[Lemma 4.4]{BucCKMM23}}]
\label{lm:inter_ij}
For every $\varepsilon>0$ and $n$,
there exists $n_0$ such that
the following holds for every $N$-partitioned hypergraph $H$ with $N\ge n_0$,
every $I_0\subseteq\{1,\ldots,N\}$ with $|I_0|\ge n_0$, and
every choice of subsets $W_{ijk}\subseteq V_{ij}$, $i<j<k$, $i,j,k\in I_0$ such that $|W_{ijk}|\ge\varepsilon |V_{ij}|$:
there exist $I\subseteq I_0$ with $|I|\ge n$ and $\alpha_{ij}$, $i<j$, $i,j\in I$, such that
$\alpha_{ij}\in W_{ijk}$ for all $k\in I$ such that $k>j$.
\end{lemma}

\begin{lemma}[Buci\'c, Cooper, Kr\'al', Mohr and Munh\'a Correia~{\cite[Lemma 4.5]{BucCKMM23}}]
\label{lm:inter_jk}
For every $\varepsilon>0$ and $n$,
there exists $n_0$ such that
the following holds for every $N$-partitioned hypergraph $H$ with $N\ge n_0$,
every $I_0\subseteq\{1,\ldots,N\}$ with $|I_0|\ge n_0$, and
every choice of subsets $W_{ijk}\subseteq V_{jk}$, $i<j<k$, $i,j,k\in I_0$ such that $|W_{ijk}|\ge\varepsilon |V_{jk}|$:
there exist $I\subseteq I_0$ with $|I|\ge n$ and $\beta_{jk}$, $j<k$, $j,k\in I$, such that
$\beta_{jk}\in W_{ijk}$ for all $i\in I$ such that $i<j$.
\end{lemma}

The last lemma is an extension to the setting
when good vertices come from the set $V_{ij}$ of a $(a,i,j)$-triad, a $(i,b,j)$-triad and $(i,j,c)$-triad,
i.e.~which vertices are good depends on all three triads.

We conclude this section with another lemma proven in~\cite[Section 4]{BucCKMM23}.

\begin{lemma}[Buci\'c, Cooper, Kr\'al', Mohr and Munh\'a Correia~{\cite[Lemma 4.2]{BucCKMM23}}]
\label{lm:inter_first}
For every $\varepsilon>0$ and $n$,
there exists $n_0$ such that
the following holds for every $N$-partitioned hypergraph $H$ with $N\ge n_0$,
every $I_0\subseteq\{1,\ldots,N\}$ with $|I_0|\ge n_0$, and
every choice of subsets $W_{ij}^{abc}\subseteq V_{ij}$ where $a,b,c,i,j\in I_0$ and $a<i<b<j<c$
such that $|W_{ij}^{abc}|\ge\varepsilon |V_{ij}|$:
there exist $I\subseteq I_0$ with $|I|\ge n$ and $\omega_{ij}$, $i<j$, $i,j\in I$, such that
$\omega_{ij}\in W_{ij}^{abc}$ for all $a,b,c\in I$ such that $a<i<b<j<c$.
\end{lemma}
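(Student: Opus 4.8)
The plan is to follow the same template as the proof of Lemma~\ref{lm:inter_ik} given above: build an auxiliary $2$-colored complete hypergraph of large uniformity on $I_0$, pass to a large monochromatic clique $J$, rule out one of the two colors, and then read off the vertices $\omega_{ij}$ from a sufficiently spread-out subset of $J$. The new feature is that the pair $\{i,j\}$ is now constrained from \emph{three} sides at once (by indices $a<i$, by $b$ strictly between $i$ and $j$, and by $c>j$), so the one-dimensional averaging used to eliminate a color in Lemma~\ref{lm:inter_ik} must be replaced by a three-dimensional analogue, namely a K\H{o}v\'ari--S\'os--Tur\'an-type bound (equivalently, the Erd\H os box theorem).

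In detail: fix $\varepsilon>0$ and $n$, put $m=3n+2$, and choose $T=T(\varepsilon,n)$ so large that every subset of relative density at least $\varepsilon$ of a combinatorial box $[T]^3$ contains an $n\times n\times n$ combinatorial sub-box. Let $K=3T+2$ and let $n_0$ be a Ramsey number guaranteeing a monochromatic clique on $K$ vertices in every $2$-coloring of the complete $m$-uniform hypergraph on $n_0$ vertices. Given a partitioned hypergraph $H$, a set $I_0$, and the subsets $W_{ij}^{abc}$ as in the statement, I would form the auxiliary $2$-colored complete $m$-uniform hypergraph on $I_0$ in which an $m$-element set $\ell_1<\cdots<\ell_m$ is colored \emph{blue} if, setting $i=\ell_{n+1}$ and $j=\ell_{2n+2}$, the sets $W_{ij}^{abc}$ with $a\in\{\ell_1,\dots,\ell_n\}$, $b\in\{\ell_{n+2},\dots,\ell_{2n+1}\}$ and $c\in\{\ell_{2n+3},\dots,\ell_{3n+2}\}$ have a common vertex, and \emph{red} otherwise, and then take a monochromatic clique $J=\{j_1<\cdots<j_K\}$.

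First I would show that $J$ cannot be red. Fixing $i=j_{T+1}$ and $j=j_{K-T}$, the set $J$ has at least $T$ elements below $i$, at least $T$ between $i$ and $j$, and at least $T$ above $j$; averaging over the at least $T^3$ sets $W_{ij}^{abc}$ with $a,b,c$ ranging over these three blocks — each of size at least $\varepsilon|V_{ij}|$ — yields a vertex $x^\ast\in V_{ij}$ that lies in at least $\varepsilon T^3$ of them, so the triples $(a,b,c)$ witnessing $x^\ast\in W_{ij}^{abc}$ form a subset of density at least $\varepsilon$ of a three-dimensional box and therefore contain an $n\times n\times n$ sub-box $A^\ast\times B^\ast\times C^\ast$; but then $A^\ast\cup\{i\}\cup B^\ast\cup\{j\}\cup C^\ast$ is a blue $m$-element subset of $J$, a contradiction. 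Hence $J$ is blue. Finally I would take $I=\{j_{n+1+2nt}:0\le t\le n-1\}$, which has $n$ elements, is spread out enough that $J$ contains at least $n$ elements in each of the three blocks determined by any pair $i<j$ of $I$, and is such that $I$ itself contains fewer than $n$ elements in each of those blocks. For each pair $i<j$ in $I$, I would assemble an $m$-element subset $L$ of $J$ with $i$ in position $n+1$, $j$ in position $2n+2$, and with $I\cap\{<i\}$, $I\cap(i,j)$, $I\cap\{>j\}$ contained respectively in the first $n$, the middle $n$, and the last $n$ entries of $L$ (padding each block with further elements of $J$ from the appropriate range); since $L$ is blue, the corresponding sets $W_{ij}^{abc}$ share a vertex, which I take to be $\omega_{ij}$. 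By construction $\omega_{ij}\in W_{ij}^{abc}$ for every $a,b,c\in I$ with $a<i<b<j<c$, as required.

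The step I expect to be the real obstacle is the elimination of the red color: in Lemma~\ref{lm:inter_ik} the analogous object is merely a $1\times(n-2)\times1$ box, which a single pigeonhole step over one running index produces for free, whereas here we must locate a genuine $n\times n\times n$ combinatorial box inside a positive-density subset of a three-dimensional box — this is precisely a K\H{o}v\'ari--S\'os--Tur\'an / Erd\H os-box statement, and it forces $T$, and hence $n_0$, to be taken very large. The remaining points — that $I$ and the $m$-sets $L$ can be chosen with the asserted incidence pattern, and that all index counts work out — are routine once $K$ has been taken large enough.
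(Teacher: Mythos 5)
Your argument is correct, and it tracks the template the paper actually uses: note that the paper does not itself prove Lemma~\ref{lm:inter_first} but only states it with a citation, writing out a Ramsey-based proof only for the sibling Lemma~\ref{lm:inter_ik} ``as an example.'' Your proof follows that same template --- color $m$-subsets of $I_0$ according to whether the relevant family of $W$-sets has a common element, extract a monochromatic clique $J$, rule out red, then read off a well-spaced $I\subseteq J$ and pad each pair $i<j$ to a blue $m$-set --- and you correctly isolate the one genuinely new ingredient: in Lemma~\ref{lm:inter_ik} a single pigeonhole step already produces the needed $1\times(n-2)\times 1$ configuration, whereas here ruling out red requires finding an $n\times n\times n$ combinatorial box inside a density-$\varepsilon$ subset of a $T\times T\times T$ box, which is exactly Erd\H os's tripartite extension of K\H{o}v\'ari--S\'os--Tur\'an and forces $T$ (hence $n_0$) to be enormous. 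The bookkeeping is sound: with $i=j_{T+1}$, $j=j_{2T+2}$ the three blocks each have $T$ elements, the averaging gives a vertex in $\varepsilon T^3$ of the sets, the box theorem gives $A^\ast,B^\ast,C^\ast$ of size $n$ forcing a blue $m$-set, and for $I=\{j_{n+1+2nt}:0\le t\le n-1\}$ the three blocks determined by any $i<j\in I$ contain at least $n$ elements of $J$ but at most $n-1$ elements of $I$, so the padding to a blue $m$-set $L$ goes through (one only needs $K\ge\max\{3T+2,\,2n^2+1\}$, which is implicit in your ``$K$ large enough''). So this is a valid proof in exactly the spirit of the one the paper supplies for Lemma~\ref{lm:inter_ik}.
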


\section{Embedding}
\label{sec:upper}

This section is devoted to showing that every sufficiently large hypergraph with uniform density larger than $8/27$
contains any fixed $\Phi_3$-colorable hypergraph $H$ as a subhypergraph.
We start with the following simple lemma, which can be found as \cite[Lemma 6]{GarKL};
we include a short proof for completeness.
Recall that $d_{ij\to k}(v)$ is the number of edges of a $(i,j,k)$-triad that contain $v$ divided by $|V_{ik}|\cdot|V_{jk}|$, and
$d_{jk\to i}(v)$ and $d_{ik\to j}(v)$ are used analogously.

\begin{lemma}
\label{lm:profile}
For every $\varepsilon>0$ and $n$,
there exists $n_0$ such that
the following holds for every $N$-partitioned hypergraph $H$ with $N\ge n_0$:
there exist $I\subseteq\{1,\ldots,N\}$ with $|I|\ge n$ and reals $a$, $b$ and $c$ such that
the following holds for all $i,j,k\in I$ such that $i<j<k$:
\begin{align*}
a & \le \frac{|\{v\in V_{ij}\mbox{ with }d_{ij\to k}(v)\ge\varepsilon\}|}{|V_{ij}|} < a+\varepsilon,\\
b & \le \frac{|\{v\in V_{jk}\mbox{ with }d_{jk\to i}(v)\ge\varepsilon\}|}{|V_{jk}|} < b+\varepsilon,\mbox{ and}\\
c & \le \frac{|\{v\in V_{ik}\mbox{ with }d_{ik\to j}(v)\ge\varepsilon\}|}{|V_{ik}|} < c+\varepsilon.
\end{align*}
\end{lemma}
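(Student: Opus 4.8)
The plan is to derive Lemma~\ref{lm:profile} from a double application of the pigeonhole principle together with Ramsey's Theorem, in the same spirit as the proofs of Lemmas~\ref{lm:inter_ik}--\ref{lm:inter_first}. First I would introduce the quantity $\varrho_{ij\to k}:=\frac{1}{|V_{ij}|}\bigl|\{v\in V_{ij} \text{ with } d_{ij\to k}(v)\ge\varepsilon\}\bigr|$, and analogously $\varrho_{jk\to i}$ and $\varrho_{ik\to j}$. Each of these lies in $[0,1]$. Partition $[0,1)$ into the $m:=\lceil 1/\varepsilon\rceil$ intervals $[t\varepsilon,(t+1)\varepsilon)$ for $t=0,\dots,m-1$ (the last one clipped to $[1-\lfloor\cdot\rfloor\varepsilon,1]$, or simply work with $\lceil 1/\varepsilon\rceil$ half-open intervals covering $[0,1]$). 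This gives a colouring of each triple $\{i,j,k\}$ (with $i<j<k$) by the triple of interval-indices of $(\varrho_{ij\to k},\varrho_{jk\to i},\varrho_{ik\to j})$, i.e.\ a colouring with at most $m^3$ colours.

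Next I would invoke Ramsey's Theorem for $3$-uniform hypergraphs: choose $n_0$ to be the Ramsey number guaranteeing that any $m^3$-colouring of the triples of an $n_0$-element set contains a monochromatic clique on $n$ vertices. Applying this to the colouring above with vertex set $\{1,\dots,N\}$ (using $N\ge n_0$), we obtain a set $I\subseteq\{1,\dots,N\}$ with $|I|\ge n$ on which all triples receive the same colour. Unpacking this, there are fixed interval-indices, hence fixed reals $a$, $b$, $c$ of the form $t\varepsilon$, such that for every $i<j<k$ in $I$ we have $\varrho_{ij\to k}\in[a,a+\varepsilon)$, $\varrho_{jk\to i}\in[b,b+\varepsilon)$ and $\varrho_{ik\to j}\in[c,c+\varepsilon)$, which is exactly the conclusion of the lemma.

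There is essentially no analytical obstacle here: the densities $d_{ij\to k}(v)$ are already defined in the Preliminaries, the three directed ``profile'' quantities are bounded in $[0,1]$, and the argument is a clean reduction to hypergraph Ramsey. The only point requiring a little care is bookkeeping: ensuring that the $\lceil 1/\varepsilon\rceil$ interval partition really does cover $[0,1]$ including the endpoint $1$ (one can take half-open intervals $[t\varepsilon,(t+1)\varepsilon)$ for $t<\lceil 1/\varepsilon\rceil$ and note $(\lceil 1/\varepsilon\rceil-1)\varepsilon<1$, so $[0,1]$ is covered), and that the three quantities are coloured simultaneously by a single $m^3$-colouring so that one application of Ramsey fixes all three reals at once. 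If one prefers, the same effect is achieved by three successive applications of Ramsey (or of the pigeonhole-plus-Ramsey scheme), passing to a sub-index-set each time; the single-colouring version is cleaner and I would present that. Since the statement only asks for existence of $n_0$ with no quantitative control, the (enormous) Ramsey bound is irrelevant and need not be estimated.
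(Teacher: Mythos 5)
Your proof is correct and takes essentially the same approach as the paper: both round the three profile quantities $s_{ij\to k}$, $s_{jk\to i}$, $s_{ik\to j}$ into $\varepsilon$-length intervals, treat the resulting triple of interval-indices as an edge-coloring of the complete $3$-uniform hypergraph on $\{1,\dots,N\}$, and apply hypergraph Ramsey once to extract a monochromatic $I$. The only (inconsequential) difference is the off-by-one in the count of intervals when $1/\varepsilon$ is an integer: the paper uses $\lfloor\varepsilon^{-1}+1\rfloor$ buckets via $\lfloor s/\varepsilon\rfloor$, whereas your $\lceil 1/\varepsilon\rceil$ half-open intervals can miss the endpoint $s=1$; as you note, clipping the last interval (or simply adding one more) fixes this.
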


\begin{proof}
Fix $\varepsilon\in (0,1)$ and $n$, and let $K=\lfloor\varepsilon^{-1}+1\rfloor^3$.
Let $n_0$ be such that
every $K$-edge-colored complete $3$-uniform hypergraph with $n_0$ vertices contains an $n$-vertex monochromatic complete hypergraph;
such $n_0$ exists by Ramsey's Theorem.
Let $H$ be an $N$-partitioned hypergraph with $N\ge n_0$, and set
\begin{align*}
s_{ij\to k} & = \frac{|\{v\in V_{ij}\mbox{ with }d_{ij\to k}(v)\ge\varepsilon\}|}{|V_{ij}|},\\
s_{jk\to i} & = \frac{|\{v\in V_{jk}\mbox{ with }d_{jk\to i}(v)\ge\varepsilon\}|}{|V_{jk}|},\mbox{ and}\\
s_{ik\to j} & = \frac{|\{v\in V_{ik}\mbox{ with }d_{ik\to j}(v)\ge\varepsilon\}|}{|V_{ik}|}.
\end{align*}
We next construct an auxiliary $K$-edge-colored complete $3$-uniform hypergraph $H'$ with $N$ vertices:
the color of the edge formed by the $i$-th, $j$-th and $k$-th vertex of $H'$
is the triple $\left(\lfloor s_{ij\to k}/\varepsilon\rfloor,\lfloor s_{jk\to i}/\varepsilon\rfloor,\lfloor s_{ik\to j}/\varepsilon\rfloor\right)$.
By the choice of $N$, there exist a subset $I\subseteq\{1,\ldots,N\}$ with $|I|=n$ and a triple $(a,b,c)$ such that
all edges formed by the $i$-th, $j$-th and $k$-th vertex of $H'$ have the color $(a,b,c)$ whenever $i,j,k\in I$.
Since the set $I$ and the reals $a\cdot\varepsilon$, $b\cdot\varepsilon$ and $c\cdot\varepsilon$ have the properties given in the statement of the lemma,
the proof of the lemma is finished.
\end{proof}

We are now ready to prove the main result of this section.

\begin{theorem}
\label{thm:upper}
Let $H$ be a $3$-uniform hypergraph.
If $H$ is $\Phi_3$-colorable, then the uniform Tur\'an density of $H$ is at most $8/27$.
\end{theorem}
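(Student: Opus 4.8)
The plan is to invoke Theorem~\ref{thm:reiher} with $d=8/27$, so that it suffices to prove that for every $\delta>0$ there is an $N$ such that every $N$-partitioned hypergraph $H_0$ of density at least $8/27+\delta$ embeds $H$. Fix a $\Phi_3$-coloring of $H$, i.e.\ an ordering $v_1,\dots,v_n$ of its vertices together with a coloring of the pairs by the seven colors occurring in $\Phi_3$. The structural feature we exploit is that every edge of $H$ has exactly one of its three pairs colored $\omega$, and that the position of this pair inside the edge — first, second, or third — determines which of the three triples of $\Phi_3$ the edge realizes.

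First I would apply Lemma~\ref{lm:profile} to pass to a large index set $I_1$ on which the profile is a fixed triple $(a,b,c)$. Deleting from the three parts of a triad the vertices with link density below $\varepsilon$ loses at most a $3\varepsilon$-fraction of the potential edges, so the product of the three densities of high-link vertices of a triad is at least $8/27+\delta-O(\varepsilon)$; hence $abc>8/27$ once $\varepsilon$ is small in terms of $\delta$. By the AM--GM inequality this gives $a+b+c\ge 3\sqrt[3]{abc}>3\sqrt[3]{8/27}=2$, and in fact $a+b+c-2$ is bounded below by a positive constant depending only on $\delta$, which we may assume exceeds $\varepsilon$. This is the only step in which the value $8/27$ and the strictness of the density hypothesis are used.

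The core of the argument is to choose, after repeatedly shrinking the index set, a canonical vertex $w^{\chi}_{ij}\in V_{ij}$ for every pair $i<j$ of the final index set $I^{\star}$ and every color $\chi$ of $\Phi_3$, so that for all $i<j<k$ in $I^{\star}$ the three triples of $\Phi_3$ are realized: $\{w^{\alpha^1}_{ij},w^{\beta^1}_{jk},w^{\omega}_{ik}\}$, $\{w^{\alpha^2}_{ij},w^{\omega}_{jk},w^{\gamma^2}_{ik}\}$ and $\{w^{\omega}_{ij},w^{\beta^3}_{jk},w^{\gamma^3}_{ik}\}$ are edges of the respective triads. I would proceed in stages. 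Since $a+b+c>2$, for every pair $i<j$ and all indices $h<i<m<j<k$ the vertices of $V_{ij}$ that simultaneously have link density at least $\varepsilon$ toward $h$, toward $m$, and toward $k$ form a set of size at least an $\varepsilon$-fraction of $V_{ij}$ (each condition separately holds for a fraction at least $a$, $b$, or $c$, by the profile); feeding these sets into Lemma~\ref{lm:inter_first} yields a vertex $w^{\omega}_{ij}$ that is high-link toward every index of the shrunk set in all three of its possible roles. With $w^{\omega}$ fixed, each triple is realized by two further applications of the intersection lemmas: for the triple $(\omega,\beta^3,\gamma^3)$, since $w^{\omega}_{ij}$ has link density at least $\varepsilon$ toward $k$, a positive fraction of $V_{jk}$ has a positive-fraction co-neighborhood inside the link of $w^{\omega}_{ij}$, so Lemma~\ref{lm:inter_jk} provides $w^{\beta^3}_{jk}$ with this property for every $i<j$, and then Lemma~\ref{lm:inter_ik} provides $w^{\gamma^3}_{ik}$ lying in that co-neighborhood for every $i<j<k$; the triples $(\alpha^1,\beta^1,\omega)$ and $(\alpha^2,\omega,\gamma^2)$ are treated the same way, the former anchored at $w^{\omega}_{ik}$ using Lemmas~\ref{lm:inter_ij} and~\ref{lm:inter_jk}, the latter anchored at $w^{\omega}_{jk}$ using Lemmas~\ref{lm:inter_ij} and~\ref{lm:inter_ik}. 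Each stage only passes to a subset of the index set, so the canonical vertices and their properties persist throughout.

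Finally, on $I^{\star}$ — which is much larger than $n$ — map $v_p$ to an element $\pi(p)$ of $I^{\star}$, namely the $p$-th among the chosen ones, leaving unused indices below the image, above it, and between consecutive images (this secures the surrounding-index requirements in the application of Lemma~\ref{lm:inter_first}), and map the pair $v_pv_q$ to $w^{c_{pq}}_{\pi(p)\pi(q)}$. Then every edge of $H$ is mapped to an edge of the appropriate triad, so $H_0$ embeds $H$, and Theorem~\ref{thm:reiher} completes the proof. I expect the main obstacle to be the construction of $w^{\omega}$: one must verify that the triple intersections of high-link sets are large enough to be admissible inputs for Lemma~\ref{lm:inter_first} — this is precisely the inequality $a+b+c>2$ equivalent to $abc>8/27$ — and one must keep careful track of the successive shrinkings and buffers so that the vertices chosen at later stages do not spoil the properties secured earlier.
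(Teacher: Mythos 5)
Your proposal is correct and follows essentially the same route as the paper: reducing to embedding in a partitioned hypergraph via Theorem~\ref{thm:reiher}, extracting a uniform profile $(a,b,c)$ with Lemma~\ref{lm:profile}, deriving $a+b+c>2$ from $abc>8/27$ by AM--GM, constructing the $\omega$-vertices via Lemma~\ref{lm:inter_first} from the triple intersection of the three high-link sets, and then realizing each of the three triples of $\Phi_3$ by two further applications of Lemmas~\ref{lm:inter_ij}, \ref{lm:inter_jk}, \ref{lm:inter_ik} anchored at the appropriate $\omega$-vertex. The only cosmetic deviation is your insertion of buffer indices in the final map, which is harmless but unnecessary since the edge-forming properties are already established for all index triples in the final set.
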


\begin{proof}
Fix a $\Phi_3$-colorable hypergraph $H$ and let $n$ be the number of vertices of $H$.
By Theorem~\ref{thm:reiher},
it is enough to show that for every $\delta>0$, there exists $N_0$ such that
every $N_0$-partitioned hypergraph with density at least $8/27+\delta$ embeds $H$.
Fix $\delta\in (0,1)$.
We next choose values $N_0,\ldots,N_8$ such that $N_0\gg N_1\gg\cdots\gg N_7\gg N_8$.
To do so, set $\varepsilon=\delta/20$, $N_8=n$, and the values of $N_0,\ldots,N_7$ as follows.
\begin{itemize}
\item $N_7$ is the value of $n_0$ from Lemma~\ref{lm:inter_ik} applied with $\varepsilon$ and $N_8$.
\item $N_6$ is the value of $n_0$ from Lemma~\ref{lm:inter_jk} applied with $\varepsilon$ and $N_7$.
\item $N_5$ is the value of $n_0$ from Lemma~\ref{lm:inter_ik} applied with $\varepsilon$ and $N_6$.
\item $N_4$ is the value of $n_0$ from Lemma~\ref{lm:inter_ij} applied with $\varepsilon$ and $N_5$.
\item $N_3$ is the value of $n_0$ from Lemma~\ref{lm:inter_jk} applied with $\varepsilon$ and $N_4$.
\item $N_2$ is the value of $n_0$ from Lemma~\ref{lm:inter_ij} applied with $\varepsilon$ and $N_3$.
\item $N_1$ is the value of $n_0$ from Lemma~\ref{lm:inter_first} applied with $\varepsilon$ and $N_2$.
\item $N_0$ is the value of $n_0$ from Lemma~\ref{lm:profile} applied with $2\varepsilon$ and $N_1$.
\end{itemize}
For this value of $N_0$,
we will show that every $N_0$-partitioned hypergraph with density at least $8/27+\delta$ embeds $H$.

Fix an $N_0$-partitioned hypergraph $H_0$ with density at least $8/27+\delta$.
By Lemma~\ref{lm:profile},
there exist $I_1\subseteq\{1,\ldots,N_0\}$ with $|I_1|\ge N_1$ and reals $a$, $b$ and $c$ such that
\begin{align*}
a & \le \frac{|\{v\in V_{ij}\mbox{ with }d_{ij\to k}(v)\ge 2\varepsilon\}|}{|V_{ij}|} < a+2\varepsilon,\\
b & \le \frac{|\{v\in V_{jk}\mbox{ with }d_{jk\to i}(v)\ge 2\varepsilon\}|}{|V_{jk}|} < b+2\varepsilon,\mbox{ and}\\
c & \le \frac{|\{v\in V_{ik}\mbox{ with }d_{ik\to j}(v)\ge 2\varepsilon\}|}{|V_{ik}|} < c+2\varepsilon,
\end{align*}
for all $i,j,k\in I_1$ and $i<j<k$.

We next show that $a+b+c$ is at least $2+3\varepsilon$,
which we later show to imply that for every choice of $k<i<k'<j<k''$ from $I_1$
the set $V_{ij}$ contains linearly many vertices $v$ such that
$d_{ij\to k''}(v)\ge 2\varepsilon$, $d_{ij\to k}(v)\ge 2\varepsilon$, and $d_{ij\to k'}(v)\ge 2\varepsilon$.
Consider any of the triads, say the $(1,2,3)$-triad.
Let $A$ be the set of all vertices $v\in V_{12}$ with $d_{12\to 3}(v)\ge 2\varepsilon$,
$B$ the set of all vertices $v\in V_{23}$ with $d_{23\to 1}(v)\ge 2\varepsilon$, and
$C$ the set of all vertices $v\in V_{13}$ with $d_{13\to 2}(v)\ge 2\varepsilon$.
Since each vertex of $V_{12}\setminus A$ is in at most $2\varepsilon\cdot |V_{23}|\cdot|V_{13}|$ edges of the $(1,2,3)$-triad,
each vertex of $V_{23}\setminus B$ is in at most $2\varepsilon\cdot |V_{12}|\cdot|V_{13}|$ edges of the $(1,2,3)$-triad, and
each vertex of $V_{13}\setminus C$ is in at most $2\varepsilon\cdot |V_{12}|\cdot|V_{23}|$ edges of the $(1,2,3)$-triad,
the $(1,2,3)$-triad has at most
\[|A|\cdot |B|\cdot |C|+6\varepsilon\cdot |V_{12}|\cdot|V_{13}|\cdot|V_{23}|
  \le \left(abc+12\varepsilon\right)\cdot |V_{12}|\cdot|V_{13}|\cdot|V_{23}| \]
edges.
Since the density of the $N_0$-partitioned hypergraph $H_0$ is at least $8/27+\delta$,
if follows that $abc+12\varepsilon$ is at least $8/27+\delta=8/27+20\varepsilon$ and so $abc\ge 8/27+8\varepsilon$.
Using the Inequality of Arithmetic and Geometric Means, we obtain that
\[a+b+c \ge 3\sqrt[3]{abc} \ge 3\sqrt[3]{8/27+8\varepsilon} \ge 3(2/3+\varepsilon)=2+3\varepsilon.\]

Our next step is to apply Lemma~\ref{lm:inter_first}
to identify vertices $\omega_{ij}$ such that
$d_{ij\to k''}(\omega_{ij})\ge 2\varepsilon$, $d_{ij\to k}(\omega_{ij})\ge 2\varepsilon$, and $d_{ij\to k'}(\omega_{ij})\ge 2\varepsilon$
for every choice of $k<i<k'<j<k''$ from a suitable index set $I_2$;
the vertices $\omega_{ij}$ will correspond to the color $\omega$ from the palette $\Phi_3$.
To apply the lemma, we define sets $W_{ij}^{kk'k''}\subseteq V_{ij}$ for $i,j,k,k',k''\in I_1$ such that $k<i<k'<j<k''$.
Consider such $k<i<k'<j<k''$.
Let $A$ be the set of all $v\in V_{ij}$ with $d_{ij\to k''}(v)\ge 2\varepsilon$,
$B$ the set of all vertices $v\in V_{ij}$ with $d_{ij\to k}(v)\ge 2\varepsilon$, and
$C$ the set of all vertices $v\in V_{ij}$ with $d_{ij\to k'}(v)\ge 2\varepsilon$.
Let $m_d$, $d\in\{0,1,2,3\}$, be the number of vertices of $V_{ij}$ contained in exactly $d$ sets among $A$, $B$ and $C$.
Observe that
\[(a+b+c)|V_{ij}|\le |A|+|B|+|C|=m_1+2m_2+3m_3\le 2(m_1+m_2)+3m_3\le 2|V_{ij}|+3m_3\],
which implies that
there are at least $\frac{a+b+c-2}{3}|V_{ij}|\ge\varepsilon|V_{ij}|$ vertices of $V_{ij}$ contained in $A\cap B\cap C$.
We set $W_{ij}^{kk'k''}=A\cap B\cap C$.
Lemma~\ref{lm:inter_first} implies that
there exist $I_2\subseteq I_1$ with $|I_2|\ge N_2$ and $\omega_{ij}\in V_{ij}$, $i,j\in I_2$ and $i<j$, such that
$\omega_{ij}\in W_{ij}^{kk'k''}$ for all $i,j,k,k',k''\in I_2$ such that $k<i<k'<j<k''$.
In particular,
it holds that $d_{ij\to k}(\omega_{ij})\ge 2\varepsilon$ for all $i,j\in I_2$, $i<j$, and $k\in I_2\setminus\{i,j\}$ (regardless
whether $k<i<j$, $i<k<j$ or $i<j<k$).

Our next goal is to identify vertices that will correspond to the colors $\alpha^1$ and $\beta^1$ of the palette $\Phi_3$.
To identify those corresponding to the color $\alpha^1$, we will apply Lemma~\ref{lm:inter_ij}.
Consider $i,j,k\in I_2$ such that $i<j<k$ and
let $W_{ijk}$ be the set of all vertices $v\in V_{ij}$ contained together with $\omega_{ik}$
in at least $\varepsilon |V_{jk}|$ edges of the $(i,j,k)$-triad.
Since the number of edges containing $\omega_{ik}$ in the triad $(i,j,k)$-triad
is at most
\[|W_{ijk}|\cdot |V_{jk}|+\varepsilon |V_{ij}\setminus W_{ijk}|\cdot |V_{jk}|\le |W_{ijk}|\cdot |V_{jk}|+\varepsilon |V_{ij}|\cdot |V_{jk}|\] and
$d_{ik\to j}(\omega_{ik})\ge 2\varepsilon$, we obtain that $W_{ijk}$ contains at least $\varepsilon |V_{ij}|$ vertices.
Lemma~\ref{lm:inter_ij} yields that
there exist $I_3\subseteq I_2$  with $|I_3|\ge N_3$ and $\alpha^1_{ij}\in V_{ij}$, $i,j\in I_3$ and $i<j$, such that
$\alpha^1_{ij}$ and $\omega_{ik}$ are contained together in at least $\varepsilon |V_{jk}|$ edges of the $(i,j,k)$-triad
for all $i,j,k\in I_3$ such that $i<j<k$.

We next apply Lemma~\ref{lm:inter_jk} to identify the vertices corresponding to the color $\beta^1$.
For $i,j,k\in I_3$ such that $i<j<k$,
we set $W_{ijk}$ to be the set of all vertices $v\in V_{jk}$ that
form together with $\alpha^1_{ij}$ and $\omega_{ik}$ an edge of the $(i,j,k)$-triad;
note that $W_{ijk}$ contains at least $\varepsilon |V_{jk}|$ vertices.
Hence, Lemma~\ref{lm:inter_jk} implies that
there exist $I_4\subseteq I_3$  with $|I_4|\ge N_4$ and $\beta^1_{jk}\in V_{jk}$, $j,k\in I_4$ and $j<k$, such that
$\alpha^1_{ij}$, $\beta^1_{jk}$ and $\omega_{ik}$ form an edge of the $(i,j,k)$-triad
for all $i,j,k\in I_4$ such that $i<j<k$.
In particular,
the vertices $\alpha^1_{ij}$, $\beta^1_{jk}$ and $\omega_{ik}$ of the $(i,j,k)$-triad
indeed correspond to the colors $\alpha^1$, $\beta^1$ and $\omega$ of the palette $\Phi_3$.

In the completely analogous way,
we apply Lemmas~\ref{lm:inter_ij} and~\ref{lm:inter_ik} (in this order) to obtain
$I_6\subseteq I_4$ with $|I_6|\ge N_6$,
$\alpha^2_{ij}$, $i,j\in I_6$ and $i<j$, and
$\gamma^2_{ik}$, $i,k\in I_6$ and $i<k$, such that
$\alpha^2_{ij}$, $\omega_{jk}$ and $\gamma^2_{ik}$ form an edge of the $(i,j,k)$-triad
for all $i,j,k\in I_6$ such that $i<j<k$, and
so the vertices $\alpha^2_{ij}$ and $\beta^2_{ij}$ correspond to the colors $\alpha^2$ and $\beta^2$ of the palette $\Phi_3$.
Then, we apply Lemmas~\ref{lm:inter_jk} and~\ref{lm:inter_ik} to obtain
$I_8\subseteq I_6$ with $|I_8|\ge N_8$,
$\beta^3_{jk}$, $j,k\in I_6$ and $j<k$, and
$\gamma^3_{ik}$, $i,k\in I_6$ and $i<k$, such that
$\omega_{ij}$, $\beta^3_{jk}$ and $\gamma^3_{ik}$ form an edge of the $(i,j,k)$-triad
for all $i,j,k\in I_8$ such that $i<j<k$;
again, the vertices $\alpha^3_{ij}$ and $\beta^3_{ij}$ correspond to the colors $\alpha^3$ and $\beta^3$ of the palette $\Phi_3$.

We now argue that the $N_0$-partitioned hypergraph $H_0$ embeds the hypergraph $H$.
By the assumption of the theorem, the hypergraph $H$ is $\Phi_3$-colorable.
Let $v_1,\ldots,v_n$ be the vertices of $H$ listed in the order such that
there exists a choice of $c_{ij}\in\{\omega,\alpha^1,\beta^1,\alpha^2,\gamma^2,\beta^3,\gamma^3\}$, $i,j\in\{1,\ldots,n\}$ such that
if vertices $v_i$, $v_j$ and $v_k$, $1\le i<j<k\le n$, form an edge of $H$,
then $(c_{ij},c_{jk},c_{ik})\in\Phi_3$.
Let $a_i$, $i\in\{1,\ldots,n\}$, be the $i$-th smallest index contained in $I_8$ (note that $I_8$ has at least $N_8=n$ elements).
For $1\le i<j\le n$,
set $w_{ij}$ among the vertices of $V_{ij}$ as follows:
\[w_{ij}=\begin{cases}
         \omega_{a_ia_j} & \mbox{if $c_{ij}=\omega$,}\\
         \alpha^1_{a_ia_j} & \mbox{if $c_{ij}=\alpha^1$,}\\
         \beta^1_{a_ia_j} & \mbox{if $c_{ij}=\beta^1$,}\\
         \alpha^2_{a_ia_j} & \mbox{if $c_{ij}=\alpha^2$,}\\
         \gamma^2_{a_ia_j} & \mbox{if $c_{ij}=\gamma^2$,}\\
         \beta^3_{a_ia_j} & \mbox{if $c_{ij}=\beta^3$, and}\\
         \gamma^3_{a_ia_j} & \mbox{if $c_{ij}=\gamma^3$.}
         \end{cases}\]
The choice of the vertices $\omega_{ij}$, $\alpha^1_{ij}$, $\beta^1_{ij}$, $\alpha^2_{ij}$, $\gamma^2_{ij}$, $\beta^3_{ij}$ and $\gamma^3_{ij}$, $i,j\in I_8$ and $i<j$,
yields that
if vertices $v_i$, $v_j$ and $v_k$, $1\le i<j<k\le n$, form an edge of $H$,
then the vertices $w_{ij}$, $w_{jk}$ and $w_{ik}$ form an edge of the $(a_i,a_j,a_k)$-triad.
We conclude that the $N_0$-partitioned hypergraph $H_0$ embeds $H$ and
the proof of theorem is finished.
\end{proof}

\section{Construction}
\label{sec:construct}

In this section,
we identify examples of $3$-uniform hypergraphs that are $\Phi_3$-colorable but not $\Phi_8$-colorable.
The key step in the construction is the following proposition.
Recall that a hypergraph is \emph{linear} if any two edges have at most one vertex in common.

\begin{proposition}
\label{prop:construct}
Suppose that there exists an $n$-vertex $m$-edge $5$-uniform linear hypergraph such that $n!<(10/9)^m$.
Then, there exists an $n$-vertex $(3m)$-edge $3$-uniform hypergraph that
is $\Phi_3$-colorable but not $\Phi_8$-colorable.
\end{proposition}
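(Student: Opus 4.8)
The plan is to realize $H$ directly on the vertex set of the given $5$-uniform linear hypergraph $G$, keeping exactly three triples inside each edge of $G$. For an edge $f=\{s,t,g_1,g_2,g_3\}$ of $G$ I will select one of its $\binom{5}{2}=10$ pairs $p_f=\{s,t\}$ (call it the \emph{core} of $f$) and put into $H$ the three triples $\{s,t,g_1\}$, $\{s,t,g_2\}$, $\{s,t,g_3\}$, i.e.\ the three triples of $f$ that contain $p_f$ (a sunflower with core $p_f$). Since $G$ is linear, two edges of $G$ share at most one vertex and hence no common triple, so this $H$ has exactly $3m$ edges and $n$ vertices no matter how the cores are chosen. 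It then remains to choose the cores so that $H$ is not $\Phi_8$-colorable, while checking that $\Phi_3$-colorability holds automatically.

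For $\Phi_3$-colorability I would show that \emph{any} ordering of the vertices works. Color every core pair $p_f$ with $\omega$. Given an edge $\{s,t,g\}$ of $H$ with core $\{s,t\}$ (say $s<t$ in the order), the pair $\{s,t\}$ is the long, the last, or the first pair of this triple according to whether $g$ lies between $s$ and $t$, below both, or above both. Since $\omega$ occurs in $\Phi_3$ exactly once in each of the three coordinates, the requirement $(c_{ij},c_{jk},c_{ik})\in\Phi_3$ then forces the edge's type, hence the colors of the two leg pairs $\{s,g\}$ and $\{t,g\}$ to the corresponding entries of the unique triple of $\Phi_3$ with $\omega$ in the required coordinate (for instance, if $g$ is between $s$ and $t$ the triple must be $(\alpha^1,\beta^1,\omega)$, so $\{s,g\}$ gets $\alpha^1$ and $\{g,t\}$ gets $\beta^1$). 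Linearity of $G$ guarantees that each leg pair lies in only one edge of $H$, so these forced assignments never conflict; colour all remaining pairs arbitrarily.

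The heart of the argument is non-$\Phi_8$-colorability, where the crucial observation is that $\Phi_8=\{\beta,\gamma\}\times\{\alpha,\gamma\}\times\{\alpha,\beta\}$ is a Cartesian product. Consequently, for a fixed ordering the condition ``every edge of $H$ is mapped into $\Phi_8$'' decouples into independent per-pair constraints: a pair used as a first pair of some edge must avoid $\alpha$, as a last pair must avoid $\beta$, and as a long pair must avoid $\gamma$. Hence $H$ is $\Phi_8$-colorable with respect to a given ordering if and only if no pair is forced to avoid all three colors, i.e.\ no pair occurs simultaneously as a first, a last, and a long pair. Inside a single edge $f$ of $G$ one checks (the small case analysis that I expect to be the only delicate point) that the unique pair of $f$ that can play all three roles is the one formed by the $2$nd and $4$th vertices of $f$ in the given order — its \emph{median pair} — and that for all three triples through it to lie in $H$ one needs precisely $p_f$ to equal that median pair; linearity rules out an over-constrained pair spanning two edges of $G$. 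Therefore $H$ is $\Phi_8$-colorable with respect to an ordering $\pi$ exactly when no edge of $G$ has its $\pi$-median pair as core. Finally I would choose the cores independently and uniformly at random among the $10$ pairs of each edge: for each fixed $\pi$, the $\pi$-median pair of $f$ is one specific pair, so $H$ is $\Phi_8$-colorable with respect to $\pi$ with probability $(9/10)^m$, and a union bound over the $n!$ orderings bounds the probability that $H$ is $\Phi_8$-colorable at all by $n!\,(9/10)^m$, which is strictly less than $1$ by the hypothesis $n!<(10/9)^m$. Hence some choice of cores produces a $3$-uniform hypergraph on $n$ vertices with $3m$ edges that is $\Phi_3$-colorable but not $\Phi_8$-colorable.

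To summarize the difficulty: the routine parts are the edge count (immediate from linearity), the explicit $\Phi_3$-coloring (forced once core pairs are set to $\omega$), and the first-moment estimate; the one step that requires care is showing that, inside each edge of $G$, a pair can be over-constrained for $\Phi_8$ only if it is the median pair and the whole sunflower through it lies in $H$. This is precisely where the product structure of $\Phi_8$ and the sunflower structure of $H$ have to be combined.
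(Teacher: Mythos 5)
Your proof is correct and follows essentially the same route as the paper: the same random sunflower construction on each edge of the linear $5$-graph with a uniformly random core pair, the same forced $\Phi_3$-coloring once the core pair is colored $\omega$, the same key observation that when the core is the ordering's ``2nd--4th'' (median) pair of an edge the three triples through it force contradictory constraints on its color under $\Phi_8$, and the same union bound over $n!$ orderings. The one place you go further than the paper --- explicitly invoking the Cartesian-product structure of $\Phi_8$ to decouple the constraints pair-by-pair and proving the full equivalence (a pair is over-constrained iff it is a median core), rather than just the single implication needed for the upper bound $(9/10)^m$ --- is correct and, if anything, makes the argument a bit cleaner than the paper's, which only verifies the forward implication before asserting the probability.
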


Let us first discuss which hypergraphs satisfy the assumption of Proposition~\ref{prop:construct}.
Observe that any edge-maximal $5$-uniform linear hypergraph with $n$ vertices has at least
\[\frac{\binom{n}{5}}{10\binom{n-2}{3}}=\frac{n(n-1)}{200}=\Omega(n^2)\]
edges.
It follows that if $n$ is sufficiently large,
then any edge-maximal $5$-uniform linear hypergraph with $n$ vertices satisfies the assumption of Proposition~\ref{prop:construct}.
In particular,
there exist constants $c<C$ such that
for any $n$ and $m$ with $Cn\log n<m<cn^2$,
there exists an $n$-vertex $m$-edge $5$-uniform linear hypergraph satisfying the assumption of Proposition~\ref{prop:construct}.

An explicit small example of a $5$-uniform hypergraph satisfying the assumption
can be obtained from the $5$-dimensional affine space $\FF_5^5$ of order $5$.
Recall that the vertices of $\FF_5^5$ are $5$-dimensional vectors over the $5$-element field $\FF_5$ and
any five-element set $\{a+x\cdot b,\;x\in\FF_5\}$ where $a,b\in\FF_5^5$ and $b\not=(0,0,0,0,0)$ is a line;
note that each line corresponds to $5\cdot 4=20$ distinct choices of $a$ and $b$ and
every pair of points is contained together in exactly one line.
The vertex set of the sought $5$-uniform hypergraph is formed by the $n=5^5=3\,125$ points of the space and
each of the $m=\frac{n(n-1)}{20}=\frac{1}{10}\binom{n}{2}=488\,125$ lines of the space forms an edge.
It is easy to check that $n!<(10/9)^m$.

We now prove Proposition~\ref{prop:construct}.

\begin{proof}[Proof of Proposition~\ref{prop:construct}]
Fix an $n$-vertex $m$-edge $5$-uniform linear hypergraph $H_0$ such that $n!<(10/9)^m$.
We construct the $n$-vertex $(3m)$-edge $3$-uniform hypergraph $H$ in a random way.
The vertex set of $H$ is the same as that of $H_0$.
In each edge $e$ of $H_0$ choose two vertices $v$ and $v'$ randomly (independently of the other edges) and
include to $H$ as edges the three triples containing $v$, $v'$ and one of the three remaining vertices of $e$;
see Figure~\ref{fig:construct} for an illustration of the construction.
We now verify that the hypergraph $H$ is $\Phi_3$-colorable and
that it is not $\Phi_8$-colorable with positive probability.

\begin{figure}
\begin{center}
\epsfbox{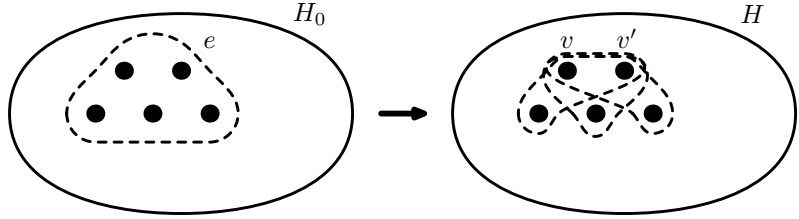}
\end{center}
\caption{The illustration of the construction of the $3$-uniform hypergraph $H$ from the $5$-uniform hypergraph $H_0$.}
\label{fig:construct}
\end{figure}

We start by verifying that the hypergraph $H$ is $\Phi_3$-colorable.
Fix any order $v_1,\ldots,v_n$ of the vertices of $H$ (and so of $H_0$).
If a pair of vertices $v_i$ and $v_j$ of $H_0$ is not contained in an edge of $H_0$, choose $c_{ij}$ arbitrarily.
Otherwise, consider any edge $e$ of $H_0$, and
let $v_i$ and $v_j$, $i<j$, be the two vertices chosen in the construction of $H$.
Set $c_{ij}=\omega$.
For each of the three remaining vertices of $e$, say $v_k$,
set $c_{ik}=\alpha^1$ and $c_{kj}=\beta^1$ if $i<k<j$,
set $c_{ki}=\alpha^2$ and $c_{kj}=\gamma^2$ if $k<i$, and
set $c_{ik}=\beta^3$ and $c_{jk}=\gamma^3$ if $k>j$.
Since the hypergraph $H$ is linear, the values of $c_{ij}$, $1\le i<j\le n$, are well-defined.
It is straightforward to verify that if $\{v_i,v_j,v_k\}$, $1\le i<j<k\le n$, is an edge of $H$,
then $(c_{ij},c_{jk},c_{ik})\in\Phi_3$.
Hence, the hypergraph $H$ is $\Phi_3$-colorable.

We next show that the hypergraph $H$ is not $\Phi_8$-colorable with positive probability.
Fix any order $v_1,\ldots,v_n$ of the vertices of $H$.
Since the $5$-uniform hypergraph $H_0$ is linear,
the hypergraph $H$ is not $\Phi_3$-colorable with respect to the fixed order $v_1,\ldots,v_n$
if and only if
the $5$-uniform hypergraph $H_0$ has an edge $\{v_a,v_b,v_c,v_d,v_e\}$, $1\le a<b<c<d<e\le n$, such that
the two chosen vertices in the construction of $H$ are the vertices $v_b$ and $v_d$.
Indeed, if $H_0$ has such an edge,
then there is no choice of a color $c_{bd}$ such that
$(c_{ab},c_{bd},c_{ad})\in\Phi_8$, $(c_{bc},c_{cd},c_{bd})\in\Phi_8$, and $(c_{bd},c_{de},c_{be})\in\Phi_8$.
And if $H_0$ has no such edge,
we can choose the colors of the pairs of the vertices $v_a,v_b,v_c,v_d,v_e$ within each edge of $H_0$ and
this choice does not affect the choices within other edges of $H_0$ as the hypergraph $H_0$ is linear.
Hence, for any fixed order of the vertices $v_1,\ldots,v_n$,
the probability that
there exists an assignment of colors to the pairs of vertices such that
$(c_{ij},c_{jk},c_{ik})\in\Phi_8$ for every edge $\{v_i,v_j,v_k\}$, $1\le i<j<k\le n$, of $H_0$
is equal to $(9/10)^m$.
The union bound implies that the probability that
there exists a vertex ordering that admits such an assignment of colors to the pairs of vertices
is at most $n!\cdot (9/10)^m<1$,
i.e., the hypergraph $H$ is not $\Phi_8$-colorable with probability at least $1-n!\cdot (9/10)^m>0$.
\end{proof}

\section{Conclusion}
\label{sec:concl}

We are aware of several follow-up results concerning the uniform Tur\'an densities of $3$-uniform hypergraphs, and
we would like to briefly mention the one that we believe to be the most surprising.
Proposition~\ref{prop:lower} asserts that if a hypergraph $H$ is not $\Phi$-colorable for a palette $\Phi$ with density $d$,
then the uniform Tur\'an density of $H$ is at least $d$.
Since all known extremal constructions for uniform Tur\'an densities are based on palette coloring constructions, see~\cite{Rei20},
it is natural to ask whether this is a general phenomenon.
In the subsequent work~\cite{Lam},
the fifth author shows that
for every $\delta>0$
every sufficiently large partitioned hypergraph with density $d$ contains
a partitioned subhypergraph with density $d-\delta$ that can be associated with a palette coloring construction.
It follows that \emph{the uniform Tur\'an density of any $3$-uniform hypergraph $H$
is equal to the supremum over all $d$ such that $H$ is not $\Phi$-colorable for some palette $\Phi$ with density $d$}.
This result brings a powerful tool for determining the uniform Tur\'an densities of hypergraphs;
for example, it was shown that every tight cycle of length at least five
is $\Phi$-colorable for every palette $\Phi$ with density larger than $4/27$ by Cooper in 2018~\cite{Coo18}, %TODO check the title
but the uniform Tur\'an density of tight cycles was determined only 5 years later~\cite{BucCKMM23}.

\bibliographystyle{bibstyle}
\bibliography{turan827}
\end{document}